\theoremstyle{definition}
\newtheorem{theorem}{Theorem}[section]
\newtheorem{lemma}[theorem]{Lemma}
\newtheorem{corollary}[theorem]{Corollary}
\newtheorem{definition}[theorem]{Definition}
\newtheorem{example}[theorem]{Example}
 \numberwithin{equation}{section}
\numberwithin{equation}{section}
\newcommand{\ZZ}{\mathbb{Z}}
\newcommand{\syst}{\operatorname{sys}}
\DeclareMathOperator{\Vol}{Vol}
\DeclareMathOperator{\ric}{Ric}
\begin{document}
\makeatletter
\newcommand{\rmnum}[1]{\romannumeral #1}
\newcommand{\Rmnum}[1]{\expandafter\@slowromancap\romannumeral #1@}
\makeatother
\title{A 2-systolic inequality on non-rational compact Kähler surfaces with positive scalar curvature}
 \author{Zehao Sha}
\address{Institute for Mathematics and Fundamental Physics, 1899 Shilian South Road, 230088 Hefei, China}
\email{zhsha0251@gmail.com}
\begin{abstract}
In this note, we prove a 2-systolic inequality on non-rational compact positive scalar curvature Kähler surfaces admitting a nonconstant holomorphic map to a positive-genus compact Riemann surface. According to the classification of positive scalar curvature K\"ahler surfaces, any such surface must be a ruled surface fibred over a complex curve with positive genus.
\end{abstract}
\maketitle
 
\section{Introduction}
Systolic geometry studies the relationship between the minimal size of non-trivial homological cycles in a Riemannian manifold and its global geometric properties. Given a closed Riemannian manifold \((M,h)\), the \emph{$k$-systole} is defined as
\[
\syst_k(M, h) := \inf \left\{ \Vol_g(Z) \mid Z \subset M \text{ embedded },\ [Z] \neq 0 \in H_k(M;\mathbb{Z}) \right\}.
\]

Consider a Riemannian manifold $(M,h)$ with positive scalar curvature (PSC for abbreviation), Schoen–Yau \cite{SchoenYau1979} confirmed that any area-minimizing surface in $M$ is homeomorphic to either $S^2$ or $\mathbb{RP}^2$. The $\mathbb{RP}^2$ case was studied in \cite{BBEN2010}, which proved the sharp upper bound of the area for an embedded area-minimizing projective plane,
\[
\operatorname{Area}_h(\Sigma)\cdot \min_M S_M \le 12\pi
\]
with equality if $M \simeq \mathbb{RP}^3$. In the $S^2$ case, Bray–Brendle–Neves \cite{BBN2010} established the following important rigidity result concerning the $\pi_2$-systole and and the minimum of the scalar curvature $\min S_M$ of a PSC $3$-manifold $(M,h)$:
\begin{theorem}[Bray-Brendle-Neves,\cite{BBN2010}]
    Let $(M^3, h)$ be a closed, orientable Riemannian 3-manifold with positive scalar curvature. Then the following inequality holds:
    \begin{equation} \label{BBN}
        \syst_{\pi_2}(M,h) \cdot \min_M S_M \leq 8\pi.
    \end{equation}
    Moreover, equality holds if and only if $M^3$ is isometrically covered by $S^2 \times S^1$ with the round metric on $S^2$, product with the flat metric on $S^1$.
\end{theorem}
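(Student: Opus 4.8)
The plan is to prove the inequality \eqref{BBN} by a standard stable‑minimal‑surface argument, and the rigidity statement by a foliation argument in the spirit of Schoen--Yau and Bray--Brendle--Neves. We assume $\pi_2(M)\neq 0$, as is implicit in the definition of $\syst_{\pi_2}$. By the existence theory of Sacks--Uhlenbeck, together with the embeddedness and regularity results of Meeks--Simon--Yau and Schoen--Yau, there is a smooth \emph{embedded} minimal two‑sphere $\Sigma\subset M$ that minimizes area in its homotopy class and realizes the systole, $\operatorname{Area}_h(\Sigma)=\syst_{\pi_2}(M,h)$; positivity of $\syst_{\pi_2}(M,h)$, valid on any compact manifold, prevents a minimizing sequence from splitting off several spheres. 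Being area‑minimizing, $\Sigma$ is two‑sided and stable, so testing the stability inequality with the constant function $1$ gives $\int_\Sigma\bigl(\operatorname{Ric}_M(\nu,\nu)+|A|^2\bigr)\,d\mu\le 0$, where $\nu$ is a unit normal and $A$ the second fundamental form of $\Sigma$. Combining this with the traced Gauss equation $2K_\Sigma=S_M-2\operatorname{Ric}_M(\nu,\nu)-|A|^2$ and using Gauss--Bonnet, $\int_\Sigma K_\Sigma\,d\mu=4\pi$, yields
\begin{equation*}
4\pi\ \ge\ \tfrac12\int_\Sigma S_M\,d\mu+\tfrac12\int_\Sigma|A|^2\,d\mu\ \ge\ \tfrac12\Bigl(\min_M S_M\Bigr)\operatorname{Area}_h(\Sigma),
\end{equation*}
which is exactly \eqref{BBN}.

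Now suppose \eqref{BBN} is an equality. Chasing equalities through the two displayed inequalities forces $A\equiv 0$ (so $\Sigma$ is totally geodesic), $S_M\equiv\min_M S_M=:2/\rho^2$ along $\Sigma$, and $\int_\Sigma\operatorname{Ric}_M(\nu,\nu)\,d\mu=0$. The Jacobi operator of $\Sigma$ is then $L=\Delta_\Sigma+\operatorname{Ric}_M(\nu,\nu)$, and the constant function --- which now has vanishing Rayleigh quotient --- is a first eigenfunction with eigenvalue $0$; hence $\operatorname{Ric}_M(\nu,\nu)\equiv 0$ along $\Sigma$, and the Gauss equation gives $K_\Sigma\equiv 1/\rho^2$, so $\Sigma$ is isometric to the round sphere $S^2(\rho)$ of area $4\pi\rho^2$. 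Since $L$ has one‑dimensional kernel spanned by a positive function, the implicit function theorem (cf.\ \cite{BBN2010}) produces, on a neighborhood $U$ of $\Sigma$, a smooth foliation $\{\Sigma_t\}_{t\in(-\varepsilon,\varepsilon)}$ by embedded spheres with $\Sigma_0=\Sigma$, each $\Sigma_t$ having constant mean curvature $H(t)$ and positive lapse function $u_t$.

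The heart of the argument is to show that every leaf is minimal, i.e.\ $H(t)\equiv 0$. On the one hand, since $\Sigma_0$ minimizes area in its homotopy class and every $\Sigma_t$ is homotopic to it, $\operatorname{Area}_h(\Sigma_t)\ge\operatorname{Area}_h(\Sigma_0)$ for all $t$, while the first variation of area along the foliation is $\tfrac{d}{dt}\operatorname{Area}_h(\Sigma_t)=-H(t)\int_{\Sigma_t}u_t\,d\mu_t$. On the other hand, differentiating the mean curvature along the flow, dividing by $u_t$, integrating over $\Sigma_t$, and feeding in the Gauss equation, Gauss--Bonnet on $\Sigma_t$, and the bound $S_M\ge\min_M S_M$, gives the differential inequality $H'(t)\le 0$; equality at a value of $t$ forces $\Sigma_t$ to be totally geodesic, round of radius $\rho$, with $u_t$ constant along $\Sigma_t$. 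Combining these inputs with the strong maximum principle (as in \cite{BBN2010}) forces $H\equiv 0$ on $U$, so each $\Sigma_t$ is a totally geodesic round $S^2(\rho)$ with $\operatorname{Ric}_M(\nu_t,\nu_t)\equiv 0$, and the metric on $U$ is the Riemannian product $g_{S^2(\rho)}\oplus dt^2$. A connectedness argument --- the set of points of $M$ lying on such a totally geodesic round $S^2(\rho)$ is nonempty, open, and closed --- then shows that all of $M$ is foliated by isometric round copies of $S^2(\rho)$; since $M$ is compact, $M$ must be isometrically covered by the standard product $S^2(\rho)\times S^1$. Conversely, on any such manifold the fiber $S^2(\rho)\times\{\ast\}$ is area‑minimizing in $\pi_2$ with area $4\pi\rho^2$, while $\min_M S_M=2/\rho^2$, so \eqref{BBN} is an equality.

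The main obstacle is precisely the step $H(t)\equiv 0$: one must extract enough pointwise geometric information from the two \emph{global} hypotheses --- area‑minimality of $\Sigma_0$ and saturation of the scalar‑curvature bound along $\Sigma_0$ --- to exclude \emph{every} constant‑mean‑curvature deformation of $\Sigma$, and then to propagate the resulting local product structure over all of $M$ using connectedness and compactness. By contrast, the inequality \eqref{BBN} is soft, following at once from stability and Gauss--Bonnet.
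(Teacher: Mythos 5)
The paper does not prove this theorem---it is quoted from Bray--Brendle--Neves with a citation---so the only meaningful comparison is with the original argument in \cite{BBN2010}, and your sketch reproduces that argument essentially faithfully (an embedded stable minimal sphere realizing the systole via Sacks--Uhlenbeck/Meeks--Simon--Yau, the stability inequality tested against $1$ combined with the Gauss equation and Gauss--Bonnet for the bound $8\pi$, and the CMC foliation with $H'(t)\le 0$ played off against area-minimality for the rigidity). The only loose ends are standard and nonfatal: the handling of possible bubbling in the minimizing sequence is justified by the Meeks--Simon--Yau structure theorem rather than by ``positivity of the systole,'' the sign conventions linking $H'(t)\le 0$ to the first variation of area must be tracked consistently to conclude $H\equiv 0$, and no strong maximum principle is actually needed once $H'\le 0$, $H(0)=0$, and $\operatorname{Area}(\Sigma_t)\ge\operatorname{Area}(\Sigma_0)$ are combined.
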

The sharp bound in \eqref{BBN} was recently refined by Xu~\cite{Xu2025}, who proved that it is not only rigid but also exhibits a strict quantitative gap away from the model case. Further developments on the interplay between the $2$-systole and positive scalar curvature include \cite{ZhuPAMS2020,Richard2020,Orikasa2025}.

In \cite{stern2022scalar}, Stern introduced the following inequality for a non-constant $S^1$-valued harmonic map $u$ on a 3-manifold $(M,h)$ using the level set method,
\begin{equation}\label{Stern-ineq}
2 \pi \int_{\theta \in S^1} \chi\left(\Sigma_\theta\right)\geq \frac{1}{2} \int_{\theta \in S^1} \int_{\Sigma_\theta}\left(|d u|^{-2}|\operatorname{Hess}(u)|^2+S_M\right)
\end{equation}
and generalized the Bray–Brendle–Neves' systolic inequality for the homological 2-systole. 

Motivated by Stern’s approach, we adapt the level set method to compact Kähler surfaces fibred over a Riemann surface with positive genus, and obtain a Bray-Brendle-Neves type inequality for the homological $2$-systole. In particular, we have the following:
\begin{theorem}
    Let \((X,\omega)\) be a compact PSC K\"ahler surface admitting a non-constant holomorphic map \(f: X \to C\) to a compact Riemann surface \(C\) with genus $g(C)\ge 1$. Then, we have
    \begin{equation} \label{main_ineq}
    \min_X S_X\cdot\syst_2(X,\omega)  \leq  8 \pi.
    \end{equation}
Moreover, the equality holds if and only if $X$ is isometrically covered by $\mathbb{CP}^1\times \mathbb{C}$ equipped with the product of the standard Fubini–Study metric on \(\mathbb{CP}^1\) and a flat metric on \(\mathbb{C}\), so that $\syst_2(X,\omega)$ is achieved by $\mathbb{CP}^1$-fibre.
\end{theorem}
A direct consequence of the above result is:
\begin{corollary}
        Let \((X,\omega)\) be a compact PSC K\"ahler surface admitting a non-constant holomorphic map \(f: X \to C\) to a compact Riemann surface \(C\) with genus $g(S)\ge 2$. Then, we have
    \begin{equation} \label{main_ineq}
    \min_X S_X\cdot\syst_2(X,\omega)  < 8 \pi.
      \end{equation}
\end{corollary}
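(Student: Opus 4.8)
The plan is to deduce the strict inequality directly from the main theorem together with its rigidity clause, by showing that the equality case is incompatible with the hypothesis $g(C)\ge 2$. By the main theorem we already have $\min_X S_X\cdot\syst_2(X,\omega)\le 8\pi$, so it suffices to rule out equality. Assume, for contradiction, that equality holds. Then the rigidity part of the main theorem provides a covering $\pi\colon \mathbb{CP}^1\times E\to X$, where $E$ is an elliptic curve and the metric on $\mathbb{CP}^1\times E$ is the product of the Fubini–Study metric and a flat metric. Since $X$ is Kähler and the product metric is Kähler, this covering can be taken holomorphic (the deck transformations act by holomorphic isometries), and it is finite because both spaces are compact.

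Next I would form the composition $\tilde f := f\circ\pi\colon \mathbb{CP}^1\times E\to C$, which is holomorphic, and which is non-constant because $f$ is non-constant and $\pi$ is surjective. The heart of the argument is then the claim that \emph{every} holomorphic map from $\mathbb{CP}^1\times E$ to a curve of genus at least $2$ is constant. I would prove this in two steps. First, restricting $\tilde f$ to a fibre $\mathbb{CP}^1\times\{p\}$ yields a holomorphic map $\mathbb{CP}^1\to C$; since $C$ is hyperbolic (uniformized by the unit disk), such a map lifts to a holomorphic map $\mathbb{CP}^1\to\mathbb{D}$ and is therefore constant by the maximum principle — equivalently, Riemann–Hurwitz, $2 = n\,(2g(C)-2) + (\text{branching})$ with $2g(C)-2\ge 2$, has no solution with $n\ge 1$. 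Hence $\tilde f$ is constant along every $\mathbb{CP}^1$ fibre and factors as $\tilde f = \bar f\circ\mathrm{pr}_E$ for a holomorphic map $\bar f\colon E\to C$. Second, the same hyperbolicity (or Riemann–Hurwitz, $0 = 2g(E)-2 = n\,(2g(C)-2) + (\text{branching})$, which forces $n=0$) shows $\bar f$ is constant. Therefore $\tilde f$ is constant, contradicting its non-constancy. This contradiction shows that equality cannot occur, and the inequality \eqref{main_ineq} is strict.

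I do not expect a serious obstacle here: the only points requiring a line of care are that the isometric cover in the equality case can be promoted to a holomorphic cover (immediate from the Kähler structure and the explicit product form of the model) and that finiteness of the cover is automatic by compactness; everything else reduces to the elementary fact that rational and elliptic curves admit no non-constant holomorphic map to a curve of genus $\ge 2$. If one prefers to avoid uniformization entirely, the Riemann–Hurwitz count given above suffices at both steps.
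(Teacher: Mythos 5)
Your proposal is correct, but it takes a different route from the one the paper intends. The paper offers no written proof --- the corollary is presented as a ``direct consequence'' --- and the intended one-line argument reads off the rigidity of the \emph{base}: the Lemma in Section~2 states that equality in \eqref{main_eq} holds if and only if $(C,\omega_0)$ is an elliptic curve with a flat metric, and the proof of the main theorem likewise records that equality forces $C$ to admit a flat metric; since a curve of genus $g(C)\ge 2$ admits no flat metric, equality is impossible and the bound is strict. You instead exploit the rigidity of the \emph{total space}: equality forces $X$ to be covered by $\mathbb{CP}^1\times E$, and then you show such a surface carries no non-constant holomorphic map to a hyperbolic curve (constant on $\mathbb{CP}^1$-fibres, factor through $E$, constant again). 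Both arguments are valid; yours has the advantage of using only the publicly stated rigidity clause of the main theorem (which mentions only the structure of $X$, not of $C$), at the cost of one extra step you gloss over: the theorem asserts an \emph{isometric} cover, and you must justify that the pulled-back complex structure on $S^2\times T^2$ is the standard product one so that the $\mathbb{CP}^1$-fibres are complex curves. Your parenthetical appeal to ``deck transformations acting by holomorphic isometries'' does not address this; the clean justification is that a parallel orthogonal complex structure on the Riemannian product $S^2_{\mathrm{round}}\times T^2_{\mathrm{flat}}$ must preserve the two factors, since they are distinguished by the holonomy representation (non-trivial isotypic component versus trivial one), and hence splits as a product of complex structures on the factors. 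Alternatively you could bypass this entirely with a topological argument: $\pi_1(X)$ is virtually $\mathbb{Z}^2$ in the equality case, while a non-constant holomorphic (hence surjective) map to $C$ forces $f_*\pi_1(X)$ to have finite index in the genus-$\ge 2$ surface group $\pi_1(C)$, which is not virtually abelian. With that gap filled, your proof stands.
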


It is worth-noting that a compact K\"ahler surface $X$ admits a PSC metric if and only if $X$ is obtained from \(\mathbb{P}^2\) or \(\mathbb{P}(E)\) by a finite sequence of blow ups, where \(E\) is a rank 2 holomorphic vector bundle over a compact Riemann surface: For minimal compact K\"ahler surfaces (which are not the blow-up of other K\"ahler surfaces), Yau \cite{yau1974curvature} showed that the existence of a \emph{K\"ahler} PSC metric is equivalent to \(X \) is ruled or \(\mathbb{P}^2\) (see also \cite{LeBrun1995}). LeBrun conjectured that this statement is still valid after allowing the blowup. The remaining gap---whether blowing up preserves the \emph{sign} of the scalar curvature was settled recently by Brown, who proved that K\"ahler blow-ups preserve the sign of scalar curvature and completed the classification \cite[Thm~B]{Brown2024}.

\noindent \textbf{Acknowledgment.}~~The majority of this work was conducted while the author was a PhD student at Institut Fourier. The author thanks G. Brown for pointing out some useful suggestions. The author also extends his gratitude to his advisor, Professor Gérard Besson, for introducing him to the field of PSC manifolds.
\section{The K\"ahler surface fibred over a Riemann surface with positive genus}
Let $f: (X^{n},\omega) \rightarrow C$ be a non-constant holomorphic map from a compact K\"ahler manifold to a compact Riemann surface $C$ with genus $g \ge 1$. Thanks to the uniformization theorem, we can always choose a metric $\omega_0$ on $C$ with non-positive Gauss curvature. 

Taking $z \in C$ be any point, then we have $f^{-1}(z):=D_z$ is a Cartier divisor in $X$ with complex codimension $1$. Indeed, $D_z$ defines a line bundle $\mathcal{O}(D_z)$ with the first Chern class $c_1(\mathcal{O}(D_z))$ represented by $f^*\omega_0$ after normalization. In the subsequent part, we only consider the smooth part of $D_z$, and denote it by $D_z$ directly.

Recall the adjunction formula for a smooth divisor $D$, the canonical bundle $K_D$ satisfies
\begin{equation}
    K_D = \left(K_X\otimes \mathcal{O}(D)\right)|_D.
\end{equation}
Since $D$ is the fiber of a holomorphic map over $C$, the normal bundle $\mathcal{N}_D \cong \mathcal{O}(D)|_D$ is trivial. By taking the first Chern class of the adjunction formula, we obtain
\begin{equation*}
    c_1(D) = c_1(X)|_D,
\end{equation*}
which implies 
\begin{equation}
   \ric_D(\omega)=\ric_X(\omega)|_D+\sqrt{-1}\,\partial_D\bar\partial_D\phi,
\end{equation}
for a smooth real-valued function \(\phi\in C^\infty(D)\) thanks to the \(\partial\bar\partial\)-lemma. Denote $\nu$ by the unit normal vector field of $D$ of type $(1,0)$, we obtain the traced Gauss equation
\begin{align*}
   S_D(\omega)=S_X(\omega)-\ric_X(\omega)(\nu,\bar\nu)+\Delta_D\phi.
\end{align*}
Moreover, since $\nu=\nabla^{1,0}f / |\nabla^{1,0}f|$, we obtain
\begin{equation}\label{traced_Gauss}
\ric_X(\omega)\bigl(\nabla^{1,0}f,\nabla^{0,1}f\bigr)
=
|\nabla^{1,0}f|^2\bigl(S_X(\omega)-S_D(\omega)+\Delta_D\phi\bigr).
\end{equation}

Recall the Bochner formula for holomorphic maps, and the co-area formula:
\begin{lemma}[Bochner formula]
    Let $f : (X,\omega) \rightarrow (N,\tilde{\omega})$ be holomorphic, then
    \begin{equation}
        \Delta |\partial f|^2 = |\nabla \partial f|^2 + \langle \operatorname{Ric}(\omega) , f^* \tilde{\omega} \rangle- \operatorname{tr}^2_{\omega} \left(f^*\operatorname{Rm}(\tilde{\omega})\right).
    \end{equation}
where $\operatorname{Ric}(\omega)$ is the Ricci form of $X$, and $\operatorname{Rm}(\tilde{\omega})$ is the curvature form of $N$.
\end{lemma}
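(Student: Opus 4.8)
The plan is to derive this Weitzenb\"ock identity --- essentially the equality underlying the Chern--Lu computation in Schwarz lemma arguments --- either from the Bochner--Kodaira formula for a suitable holomorphic section, or by a direct computation in K\"ahler normal coordinates. The first step is to note that, since $f$ is holomorphic, $\partial f$ is a \emph{holomorphic} section of the Hermitian holomorphic vector bundle $E:=\Omega^{1}_{X}\otimes f^{*}T^{1,0}N$, equipped with the metric induced by $\omega$ and $\tilde\omega$ and with its Chern connection $\nabla$. Indeed, in local holomorphic coordinates $z^{\alpha}$ on $X$ and $w^{i}$ on $N$ one has $\partial f=\dfrac{\partial f^{i}}{\partial z^{\alpha}}\,dz^{\alpha}\otimes f^{*}\partial_{w^{i}}$ with holomorphic coefficients, and $\{dz^{\alpha}\}$, $\{f^{*}\partial_{w^{i}}\}$ are local holomorphic frames, so $\bar\partial_{E}(\partial f)=0$. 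In particular $\nabla(\partial f)=\nabla^{1,0}(\partial f)$, so $|\nabla\partial f|^{2}$ in the statement is unambiguous and $|\partial f|^{2}$ is the squared norm of this section.

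Next I would invoke the pointwise Bochner--Kodaira identity for a holomorphic section $s$ of a Hermitian holomorphic bundle over the K\"ahler manifold $(X,\omega)$, namely $\Delta|s|^{2}=|\nabla^{1,0}s|^{2}-\langle\sqrt{-1}\,\Lambda_{\omega}\Theta_{E}(s),\,s\rangle$ (with $\Delta$ suitably normalized), which is obtained by expanding $\partial\bar\partial|s|^{2}$ using $\bar\partial_{E}s=0$ and the metric-compatibility of the Chern connection and then contracting with $\omega$. Taking $s=\partial f$ and using the tensor-product splitting of the curvature, $\Theta_{E}=\Theta_{\Omega^{1}_{X}}\otimes\operatorname{id}+\operatorname{id}\otimes f^{*}\Theta_{T^{1,0}N}$, I would then identify the two contributions: pairing the first summand with $\partial f$ reproduces $\langle\operatorname{Ric}(\omega),f^{*}\tilde\omega\rangle$, since $\sqrt{-1}\Lambda_{\omega}\Theta_{\Omega^{1}_{X}}$ is minus the Ricci endomorphism on $(1,0)$-forms and this pairing is exactly the contraction of the Ricci form against $f^{*}\tilde\omega$; pairing the second summand, contracted once by $\Lambda_{\omega}$ and once through the $\Omega^{1}_{X}$-slot of $\partial f$, yields $-\operatorname{tr}^{2}_{\omega}\!\big(f^{*}\operatorname{Rm}(\tilde\omega)\big)$. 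Reassembling gives the asserted formula. As an alternative one can avoid the bundle formalism altogether: in K\"ahler normal coordinates at a point, differentiate $|\partial f|^{2}=g^{\alpha\bar\beta}h_{i\bar j}(f)\,f^{i}_{\alpha}\overline{f^{j}_{\beta}}$ twice, using $\partial_{\bar\delta}f^{i}_{\alpha}=0$; the derivatives falling on $g^{\alpha\bar\beta}$ produce the Ricci curvature of $X$, those falling on $h_{i\bar j}(f)$ produce the pulled-back curvature of $N$ via the chain rule, and those falling on $f^{i}_{\alpha}\overline{f^{j}_{\beta}}$ produce $|\nabla\partial f|^{2}$.

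I do not expect a genuine obstacle here: the content is classical, and the only delicate point is consistently keeping track of signs and normalization constants --- the convention for $\Delta$ versus the complex Laplacian $g^{\alpha\bar\beta}\partial_{\alpha}\partial_{\bar\beta}$, the sign convention for the Chern curvature, the factor relating the $\Lambda_{\omega}$-contraction to the Riemannian trace, and the precise meaning of the double trace $\operatorname{tr}^{2}_{\omega}$. These must be pinned down so that the Ricci term enters with a $+$ sign and the target-curvature term with a $-$ sign, which is exactly what makes the identity effective once $\tilde\omega$ is chosen with non-positive curvature (as is done for the base $C$ in this section), so that $-\operatorname{tr}^{2}_{\omega}(f^{*}\operatorname{Rm}(\tilde\omega))\ge 0$.
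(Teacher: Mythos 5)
The paper does not actually prove this lemma; it is recalled as a classical fact (the Chern--Lu/Eells--Sampson identity) with no argument given. Your derivation --- viewing $\partial f$ as a holomorphic section of $\Omega^1_X\otimes f^*T^{1,0}N$, applying the Bochner--Kodaira identity, and splitting the curvature into the $\Theta_{\Omega^1_X}\otimes\operatorname{id}$ and $\operatorname{id}\otimes f^*\Theta_{T^{1,0}N}$ pieces (or equivalently the direct computation in K\"ahler normal coordinates) --- is the standard and correct proof, and your sign bookkeeping matches how the paper uses the formula afterwards, namely that the Ricci term enters positively and the target-curvature term is nonnegative when $\operatorname{Rm}(\tilde\omega)\le 0$.
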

\begin{lemma}[Co-area formula]
    Let $(X^n,\omega)$ be a compact K\"ahler manifold and let $(C,\omega_0)$ be a compact Riemann surface such that
    $$
\int_{C} \omega_0 = \frac{1}{n},
    $$
    after normalization. Then for any $g\in C^\infty(X)$ and $z \in E$ regular value of $f$, we have
    \begin{equation}
        \int_X g~\omega^n = \int_{C} \left(\int_{f^{-1}(z)}\frac{g}{|\partial f|^2}~ \omega^{n-1}\right) \omega_0.
    \end{equation}
\end{lemma}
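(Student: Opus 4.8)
The plan is to reduce the identity to a pointwise relation between top-degree forms on the regular locus of $f$, and then to integrate along the fibres. Since $f$ is holomorphic and non-constant, its critical set $Z\subset X$ is a proper analytic subset, hence has measure zero, and its image $f(Z)\subset C$ is a proper analytic subset of the curve $C$, i.e.\ a finite set of points, hence also has measure zero. On $X\setminus Z$ the $(1,0)$-differential $\partial f$ is nowhere vanishing, so $|\partial f|^2>0$ and $g/|\partial f|^2$ is smooth; as both sides of the asserted formula are unaffected by the measure-zero sets $Z$ and $f(Z)$, I will work throughout over regular points of $X$ and regular values $z\in C$.

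The geometric heart of the matter is that the coarea Jacobian of a holomorphic map equals $|\partial f|^2$. At a regular point $p$, the fibre $F=f^{-1}(f(p))$ is a complex hypersurface with $T_pF=\ker\partial f$; its orthogonal complement $N_p$ is a complex line, and since $f$ is holomorphic the restriction $df|_{N_p}$ intertwines the complex structures, hence is conformal onto $T_{f(p)}C$ and scales $2$-dimensional area by exactly $|\partial f|^2$. I will record this algebraically: writing $f^*\omega_0=\frac{i}{2}\,\lambda(f)\,\partial f\wedge\overline{\partial f}$, the pullback is a nonnegative $(1,1)$-form of rank at most one whose $\omega$-trace is precisely $|\partial f|^2$, so the standard trace identity for $(1,1)$-forms yields the pointwise relation
\begin{equation}\label{eq:coarea-key}
 f^*\omega_0\wedge\frac{\omega^{n-1}}{(n-1)!}\;=\;|\partial f|^2\,\frac{\omega^{n}}{n!}.
\end{equation}

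Granting \eqref{eq:coarea-key}, the lemma follows quickly. Dividing by $|\partial f|^2$ on $X\setminus Z$ gives, for every $g\in C^\infty(X)$, the identity of $2n$-forms $g\,\tfrac{\omega^n}{n!}=f^*\omega_0\wedge\big(\tfrac{g}{|\partial f|^2}\,\tfrac{\omega^{n-1}}{(n-1)!}\big)$. Integrating over $X$ and applying integration along the fibres --- the projection formula $\int_X f^*\omega_0\wedge\beta=\int_{C}\big(\int_{f^{-1}(z)}\beta\big)\,\omega_0$, valid away from the finite set $f(Z)$ --- with $\beta=\tfrac{g}{|\partial f|^2}\tfrac{\omega^{n-1}}{(n-1)!}$, produces
\begin{equation*}
 \int_X g\,\frac{\omega^n}{n!}\;=\;\int_{C}\left(\int_{f^{-1}(z)}\frac{g}{|\partial f|^2}\,\frac{\omega^{n-1}}{(n-1)!}\right)\omega_0,
\end{equation*}
where $\tfrac{\omega^{n-1}}{(n-1)!}\big|_{f^{-1}(z)}$ is the induced Riemannian volume of the complex fibre by Wirtinger's theorem. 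This is the claimed formula under the Kähler convention that $\omega^{k}$ abbreviates the metric volume form $\omega^{k}/k!$.

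I expect essentially all of the genuine care to lie in two bookkeeping points rather than in the geometry. The first is the overall constant: reading \eqref{eq:coarea-key} literally, the factorials contribute $n!/(n-1)!=n$, so the coefficient is $1$ precisely when $\omega^n$ and $\omega^{n-1}$ denote the normalized volume forms and $|\partial f|^2$ is the full norm (including the metric factor $\lambda$ of $C$); with the bare wedge powers an extra factor $n$ appears. Notably the hypothesis $\int_C\omega_0=\tfrac1n$ does not affect this identity at all --- the right-hand side is invariant under $\omega_0\mapsto c\,\omega_0$, since $|\partial f|^2$ scales with $\omega_0$ --- so that normalization is inherited from the choice of uniformizing metric in the previous section and serves only to make $[f^*\omega_0]$ represent the fibre class $c_1(\mathcal{O}(D_z))$. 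The second point is the analytic justification of fibrewise integration across the finitely many singular fibres together with the integrability of $g/|\partial f|^2$; since $f(Z)$ is finite and $|\partial f|^2>0$ on each regular fibre, this is handled by dominated convergence over $C\setminus f(Z)$. The trace identity \eqref{eq:coarea-key} is where I would concentrate the written proof, as it simultaneously encodes the holomorphic coarea Jacobian and the Wirtinger normalization of the fibre volume.
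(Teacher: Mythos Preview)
The paper states this lemma as a recalled fact and offers no proof, so there is nothing to compare against line by line. Your argument is the standard one and is correct: the identity $n\,f^*\omega_0\wedge\omega^{n-1}=|\partial f|^2\,\omega^n$ (equivalently your \eqref{eq:coarea-key}) follows from the trace formula for the rank-one $(1,1)$-form $f^*\omega_0$, and fibrewise integration over the complement of the finitely many critical values yields the result.

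Your bookkeeping remarks are well taken. You are right that the hypothesis $\int_C\omega_0=\tfrac1n$ plays no role in the displayed identity, since scaling $\omega_0$ rescales $|\partial f|^2$ by the same factor; and you are also right that with bare wedge powers the pointwise relation carries a factor $n$, so the lemma as written is consistent only under the convention $\omega^k\leadsto\omega^k/k!$ (or some compensating convention hidden in $|\partial f|^2$). In the paper this discrepancy is harmless: the lemma is invoked only once, in the proof of the next lemma, where both sides are multiplied by $n$ anyway, so the overall constant in the main inequality is unaffected. If you want the argument to be airtight on its own, simply state the pointwise identity with the explicit factor $n$ and carry it through.
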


When $(N,\tilde{\omega})=(C,\omega_0)$, we have 
$$
\Delta |\partial f|^2 \ge \left|\nabla \partial f\right|^2 + \operatorname{Ric}_X(\omega)\left(\nabla^{1,0}f,\nabla^{0,1}f\right) ,
$$
with equality holds if $\operatorname{Rm}(\omega_0)\equiv 0$. Combining with the traced Gauss equation (\ref{traced_Gauss}), we obtain
\begin{align}\label{important}
\Delta |\partial f|^2
\;\ge\; |\nabla \partial f|^2
        + |\nabla^{1,0}f|^2\bigl(S_X(\omega)-S_D(\omega)+\Delta_D\phi\bigr).
\end{align}
Combining with the co-area formula and (\ref{important}), we can then see the following identity:
\begin{lemma}
Let \((X^{n},\omega)\) be a compact Kähler manifold and let \((C,\omega_0)\) be a
compact Riemann surface of genus \(g(C)\ge1\), endowed with a constant curvature
metric \(\omega_0\). Suppose that \(f\colon X \rightarrow C\) is a non-constant
holomorphic map, and let \(D=f^{-1}(z)\) denote a regular fibre. Then
\begin{align}\label{main_eq}
\int_{C} \left[\int_{D} \left(
\frac{\left|\nabla \partial f\right|^2}{|\partial f|^2}
+  S_X(\omega) -  S_{D}(\omega) \right)\omega^{n-1} \right]\omega_0 
\;\le\; 0.
\end{align}
Moreover, equality holds in \eqref{main_eq} if and only if \(g(B)=1\) and
\((C,\omega_0)\) is an elliptic curve with a flat metric.
\end{lemma}
\begin{proof}
Let $C=A \cup B$, where $A$ contains the set for all critical values of $f$. Then, Integrating \eqref{important} over $f^{-1}(B)$, we have
\begin{align}\label{a}
\int_{f^{-1}(B)}\Bigl(
\bigl|\nabla \partial f\bigr|^2
+ \bigl|\nabla^{1,0}f\bigr|^2\bigl(S_X(\omega) - S_{D}(\omega)+\Delta_D\phi\bigr)\Bigr)\omega^n
\;\le\; \int_{f^{-1}(B)} \bigl(\Delta |\partial f|^2\bigr) \omega^n.
\end{align}
Since \(X\) is compact and has no boundary,
\[
\int_X \bigl(\Delta |\partial f|^2\bigr)\omega^n=0.
\]
Hence, by choosing \(A\) with arbitrarily small measure and using Sard's theorem,
we may pass to the limit in \eqref{a} and obtain
\[
\int_X\Bigl(
\bigl|\nabla \partial f\bigr|^2
+ \bigl|\nabla^{1,0}f\bigr|^2\bigl(S_X(\omega) - S_{D}(\omega)+\Delta_D\phi\bigr)\Bigr)\omega^n
\;\le\; 0.
\]
We apply the co-area formula to the left-hand side of (\ref{a}), which gives the desired inequality. The equality statement follows from the discussion before \eqref{important}: equality in the Bochner inequality holds if and only if \(\operatorname{Rm}(\omega_0)\equiv0\), namely if and only if \(g(C)=1\) and \((C,\omega_0)\) is flat.
\end{proof}
\section{The 2-systole in K\"ahler surface}

This section is devoted to the study of the (homological) 2-systole in Kähler surfaces. We begin by recalling the fundamental definition of the $k$-systole in Riemannian geometry. Let $(M, h)$ be a closed Riemannian manifold of dimension $n \geq k$. The $k$-systole $\syst_k(M, h)$ is defined as the infimum of the volumes of all integral $k$-cycles representing nontrivial homology classes:
\[
\syst_k(M, h) := \inf \left\{ \Vol_h(Z) \mid Z \subset M\text{ embedded }, [Z]\neq 0 \in H_k(M; \ZZ) \right\}.
\]
In the context of Kähler geometry, additional structure enriches this concept. Let $(X,\omega)$ be a compact Kähler surface. The 2-systole is the least area among nonseparating real surfaces in $X$.
\begin{definition}[2-systole in Kähler surfaces]\label{def:2-systole-kahler}
For a compact Kähler surface $(X, \omega)$, the \emph{2-systole} can be defined by
\[
\syst_2(X, \omega) = \inf \left\{ \Vol_{\omega}(Z) \mid Z \subset X\text{ embedded }, [Z]\neq 0 \in H_2(X; \ZZ)\right\}.
\]
\end{definition}
The following result gives a Bray-Brendle-Neves type inequality \cite{BBN2010} for 2-systole for compact PSC K\"ahler surfaces over a Riemann surface with genus $g \ge 1$.
\begin{theorem}
    Let \((X,\omega)\) be a compact K\"ahler surface admitting a non-constant holomorphic map \(f: X \to C\) to a complex curve \(C\) with genus $g(C)\ge1$. Then, we have
    \begin{equation} \label{main_ineq}
    \min_X S_X\cdot\syst_2(X,\omega)  \leq  8 \pi.
    \end{equation}
Moreover, the equality holds if and only if $X$ is isometrically covered by $\mathbb{CP}^1\times \mathbb{C}$ equipped with the product of the standard Fubini–Study metric on \(\mathbb{CP}^1\) and a flat metric on \(\mathbb{C}\), so that $\syst_2(X,\omega)$ is achieved by $\mathbb{CP}^1$-fibre.
\end{theorem}
\begin{proof}
    It follows from (\ref{main_eq}),
    \begin{align*}
        \int_{C} \left[\int_{D_z} \left(\frac{\left|\nabla \partial f\right|^2}{|\partial f|^2}+S_X\right)\omega \right]\omega_0 \le  \int_{C} \left(\int_{D_z} S_{D_z}\cdot\omega \right)\omega_0.
    \end{align*}
    Then by the Gauss-Bonnet formula,
    \begin{align*}
       4\pi\int_{C} \chi(D_z)~\omega_0&= \int_{C} \left(\int_{D_z} S_{D_z}\cdot\omega \right)\omega_0  \\
       &\geq \int_{C} \left(\int_{D_z} S_{X}\cdot\omega \right)\omega_0 \\
       &\geq \min_X S_X \cdot\int_{C} \Vol_\omega(D_z)~\omega_0.
    \end{align*}
    Denote $N(z)$ by the number of the homological non-zero irreducible components of $D_z$, we then have
    $$
\Vol_\omega(D_z) \geq N(z) \cdot \syst_2(X,\omega).
    $$
    Meanwhile, since $D_z \simeq \mathbb{CP}^1$, we have
    $$
\chi(D_z) = 2 N(z).
    $$
    Thus, we have 
    \begin{align*}
        8 \pi \int_{C} N(z)~\omega_0 &= 4\pi\int_{C} \chi(D_z)~\omega_0 \\
        &\geq \min_X S_X \cdot\int_{C} \Vol_\omega(D_z)~\omega_0 \\
        &\geq \min_X S_X\cdot\syst_2(X,\omega) \int_{C}  N(z)~\omega_0 ,
    \end{align*}
    which gives the desired result. The equality holds in case $C$ admits a flat metric and $\nabla f$ is parallel along $D_z$, with each irreducible component of $D_z $ is $\mathbb{CP}^1$.
\end{proof}

We finally see a simple but interesting example:
    \begin{example}
Let $X=\mathbb{P}^1\times C$ be a compact complex surface, where $C$ is a compact Riemann surface of genus $g\ge2$. Equip $X$ with the product K\"ahler metric
\[
\omega=\omega_{\mathrm{FS}}\oplus\omega_C,
\]
where on $\mathbb{P}^1$ we take the Fubini--Study metric normalized by
\[
\Vol_{\omega_{\mathrm{FS}}}(\mathbb{P}^1)=\pi,\qquad S_{\mathbb{P}^1}=8,
\]
and on $C$ we choose a constant scalar curvature metric with
\[
S_C=-8+\varepsilon\quad\text{for some }\varepsilon\in(0,8).
\]
Then the product scalar curvature \(S_X=S_{\mathbb{P}^1}+S_C=8+(-8+\varepsilon)=\varepsilon\) is constant, hence $\min_X S_X=\varepsilon$ and $X$ has positive scalar curvature.

Next, compare the areas of the two basic complex curves:
\begin{itemize}
  \item For the $\mathbb{P}^1$-fiber $F=\mathbb{P}^1\times\{p\}$, calibration by $\omega$ gives
  \(
  \Vol_\omega(F)=\pi.
  \)
  \item For the $C$-fiber $C_p=\{q\}\times C$, Gauss--Bonnet formula yields
  \[
  \int_C K_C\,\omega=2\pi\chi(C)=2\pi(2-2g)=-4\pi(g-1).
  \]
  Then we obtain \[\Vol_\omega(C_p)=\frac{8\pi(g-1)}{\,8-\varepsilon\,}.\]
\end{itemize}
For $g\ge2$ and $\varepsilon\in(0,8)$ one has $\operatorname{Area}_\omega(C_p)>\pi$, so the $2$-systole is realized by the $\mathbb{P}^1$-fiber:
\[
\syst_2(X,\omega)=\min\big\{\operatorname{Area}_\omega(F),\operatorname{Area}_\omega(C_p)\big\}=\pi.
\]
Consequently,
\[
\min_X S_X\cdot\syst_2(X,\omega)=\varepsilon\cdot\pi<8\pi.
\]
In particular, this product is independent of the genus $g$, and it can be made arbitrarily close to $8\pi$ by letting $\varepsilon\uparrow 8$.
\end{example}

\bibliographystyle{amsalpha}
\bibliography{wpref}
\end{document}